\begin{document}

\title{Crossing Minimization within Graph Embeddings}

\author{\name Amina Shabbeer $\dagger$ \email shabba@cs.rpi.edu      \AND
       \name Cagri Ozcaglar $\dagger$ \email ozcagc2@cs.rpi.edu        \AND
       \name Kristin P. Bennett $\dagger$* \email bennek@rpi.edu\\
       \addr  Department of Computer Science $\dagger$\\
       Department of Mathematical Sciences*\\
       Rensselaer Polytechnic Institute\\
       Troy, NY-12180, USA}

\editor{}

\maketitle

\begin{abstract}
We propose a novel optimization-based approach to embedding heterogeneous high-dimensi\-onal data characterized by a graph. The goal is to create a two-dimensional visualization of the graph structure such that edge-crossings are minimized while preserving proximity relations between nodes. This paper provides a fundamentally new approach for addressing the crossing minimization criteria that exploits Farkas' Lemma to re-express the condition for no edge-crossings as a system of nonlinear inequality constraints. The approach has an intuitive geometric interpretation closely related to support vector machine classification. While the crossing minimization formulation can be utilized in conjunction with any optimization-based embedding objective, here we demonstrate the approach on multidimensional scaling by modifying the stress majorization algorithm to include penalties for edge crossings. The proposed method is used to (1) solve a visualization problem in tuberculosis molecular epidemiology and (2) generate embeddings for a suite of randomly generated graphs designed to challenge the algorithm. Experimental results demonstrate the efficacy of the approach. The proposed edge-crossing constraints and penalty algorithm can be readily adapted to other supervised and unsupervised optimization-based embedding or dimensionality reduction methods. The constraints can be generalized to remove overlaps between any graph components represented as convex polyhedrons including node-edge and node-node intersections.\end{abstract}
\begin{keywords}
Dimensionality reduction, majorization, crossings, stress, graph embedding
\end{keywords}

\section{Introduction}

A good graph visualization clearly and effectively describes the nodes of a graph as well as the underlying relationships between these nodes. In this work, we propose a novel approach to embedding a high-dimensional graph in a two-dimensional space such that crossings are minimized while proximity relations between nodes are simultaneously preserved. The quality of a visualization is gauged on the basis of how easily it can be understood and interpreted. In this context, a minimum number of edge crossings has been identified as the most desirable characteristic for graph visualizations \citep{purchase1997aesthetic,ware2002cognitive,battista}. Minimizing crossings is a challenging problem. Determining the minimum number of crossings for a graph is NP-complete \citep{garey1983crossing}. In practice as well, this is a very difficult problem. Existing state of the art integer programming based \emph{exact} crossing minimization approaches for general graphs are only tractable for small sparse graphs. Graphs of upto 100 nodes can be solved in 30 minutes of computation time, and reaching the optimal solution is not guaranteed\citep{exactcrossing,oocm}. 
There exists a body of literature on the problems of drawing restricted classes of graphs e.g. 2-layer or k-layer crossing minimized graphs \citep{2layerplanar, hierarchplanar}, which are also NP-complete problems and difficult in practice. Polynomial time embedding algorithms do exist for the special class of planar graphs. However, these classes of methods do not allow for much flexibility in placement of nodes implying limited additional constraint satisfaction capability such as for preserving proximity relations. Additionally, the topological transformations involved alter the user's mental map of the data that may be based on local structure or relative proximities. 

However, proximity preservation is a much desired property for general data embedding \citep{sne,lle}. Frequently, the nodes of a graph represent objects that have their own intrinsic properties with associated distances or similarity measures that describe implicit relations between all pairs of nodes. A graph embedding  that serves to represent such relationships faithfully must produce a mapping of nodes from high-dimensional space to low-dimensional vectors that preserves pairwise proximity relations. For general data embedding, the desired quality is frequently expressed as a function of the embedding and then optimized. For example, in Multidimensional Scaling (MDS), the goal is to produce an embedding that minimizes the difference between the actual distances and (Euclidean) distances in the embedding between all pairs of nodes. Several nonlinear dimensionality reduction techniques have been proposed where the emphasis is on preserving local structure \citep{laplacianeigenmaps,sne,tsne}. While these methods can be applied to visualize the nodes of the graph, the extant edges between nodes create a large number of edge-crossings. 

Thus, embedding such heterogeneous data poses a unique challenge. The underlying graph structure must be presented clearly. At the same time, since the nodes of the graph are themselves data points characterized by features, the positions of nodes in the embedding must effectively represent the proximities between the data points in the original space. A natural question is: how can the number of edge crossings in the embedded graph be minimized, while simultaneously optimizing the embedding objective? This requires expressing the basic condition for no edge crossings as an optimization problem, which has previously not been done \footnote{Existing integer programming based crossing-number formulations incorporate a large number of constraints e.g. \emph{Kuratowski} constraints characterizing planar subgraphs based on the theorem that a graph is planar iff it contains no Kuratowski subdivisions \citep{berge1958theorie}.}. The formulation of edge-crossings constraints and the representation of crossing minimization as a continuous optimization problem are the principle contributions of this paper. This representation is a fundamentally new paradigm for crossing minimization. Expressing edge-crossing minimization as a \emph{continuous} optimization problem offers the additional advantage that other embedding objectives such as proximity preserving criteria can be simultaneously optimized. 
  
The key theoretical insight of the paper is that the condition that two edges do {\bf not} cross is equivalent to the feasibility of a system of nonlinear inequalities.  In Section \ref{sec:cntnsedgecrossing}, we prove this using a theorem of the alternative: Farkas' Lemma. The transformed system  has an intuitive geometric interpretation, it ensures that the two edges are separated by a linear hyperplane.  Thus, each edge-crossing constraint reduces to a classification problem which is very closely related to support vector machines (SVM). The resulting system of inequalities can then be relaxed to create a natural penalty function for each possible edge crossing. This non-negative function goes to zero if no edge crossings occur. This general approach is applicable to the intersection of any component of the graph represented as a convex polyhedron, including arbitrary shaped nodes, labels and subgraphs represented by their convex hulls. The approach is a distinct and important departure from prior crossing minimization approaches \citep{orth,exactcrossing,gutwenger2004experimental} and graph drawing algorithms that employ heuristics to avoid crossings \citep{fdpfruchterman,wills1999nicheworks}. It also allows for extensions of general data embedding methods applied to graphs that otherwise ignore crossings.

In Section \ref{sec:CR-SM}, we explore how edge-crossing constraints can be added to stress majorization algorithms for MDS.  
We develop the Crossing Reduction with Stress Majorization (CR-SM) algorithm which simultaneously minimizes stress while eliminating or reducing edge crossings using penalized stress majorization. The method solves a series of unconstrained nonlinear programs.  We test the method on two sets of graph embedding problems with associated distance matrices. We first demonstrate the approach on a compelling problem involving genetic distances in tuberculosis molecular epidemiology.   The graphical results are shown for spoligoforests drawn using a set of fifty-five biomarkers.  The method found planar graph embeddings with lower stress than those generated using the state-of-the-art Graphviz NEATO algorithm (stress majorization for MDS).  We then demonstrate the approach on randomly generated high-dimensional graphs designed to have some planar embeddings with high stress. The results show that the proposed approach, CR-SM, can produce two-dimensional embeddings with minimal edge crossings with little increase in stress.  Additional illustrations are provided in the appendix. Animations of the algorithm illustrating how the edge crossing penalty progressively transforms the graphs are provided at \url{http://www.cs.rpi.edu/~shabba/FinalGD/}.

We now describe our notation.  All vectors will be column vectors unless transposed to a row vector by a prime $'$. For a vector $x$ in the \emph{n-}dimensional real space $\mathbb{R}^n$, $x_i$ represents the $ith$ component. $x_+$ denotes the vector in $\mathbb{R}^n$  with components $(x_+)_i=max(x_i, 0), i=1,..,n$. The 1-norm of $x$,  $\displaystyle\sum\limits_{i=1}^n |x_i|$ will be denoted by $||x||_1$, and the 2-norm of $x$, $\sqrt{x'x}$ will be denoted by $||x||_2$. A vector of ones in a real space of arbitrary dimension will be denoted by $e$. The notation $A \in \mathbb{R}^{m \times n}$ will signify a real  $m \times n$ matrix. For such a matrix, $A'$ will denote the transpose, while $A_i$ will denote the $ith$ row. The notation $\displaystyle{\underset{x \in X}{argmin f(x)}}$ represents the solution to the problem $\displaystyle{\underset{x \in X}\min f(x)}$ and equals $\{x^* \in X: f(x^*) \leq f(x), \forall x \in X\}$. 

\section{Motivation}
\label{sec:Motivation}
\begin{figure}
\includegraphics[scale=0.5]{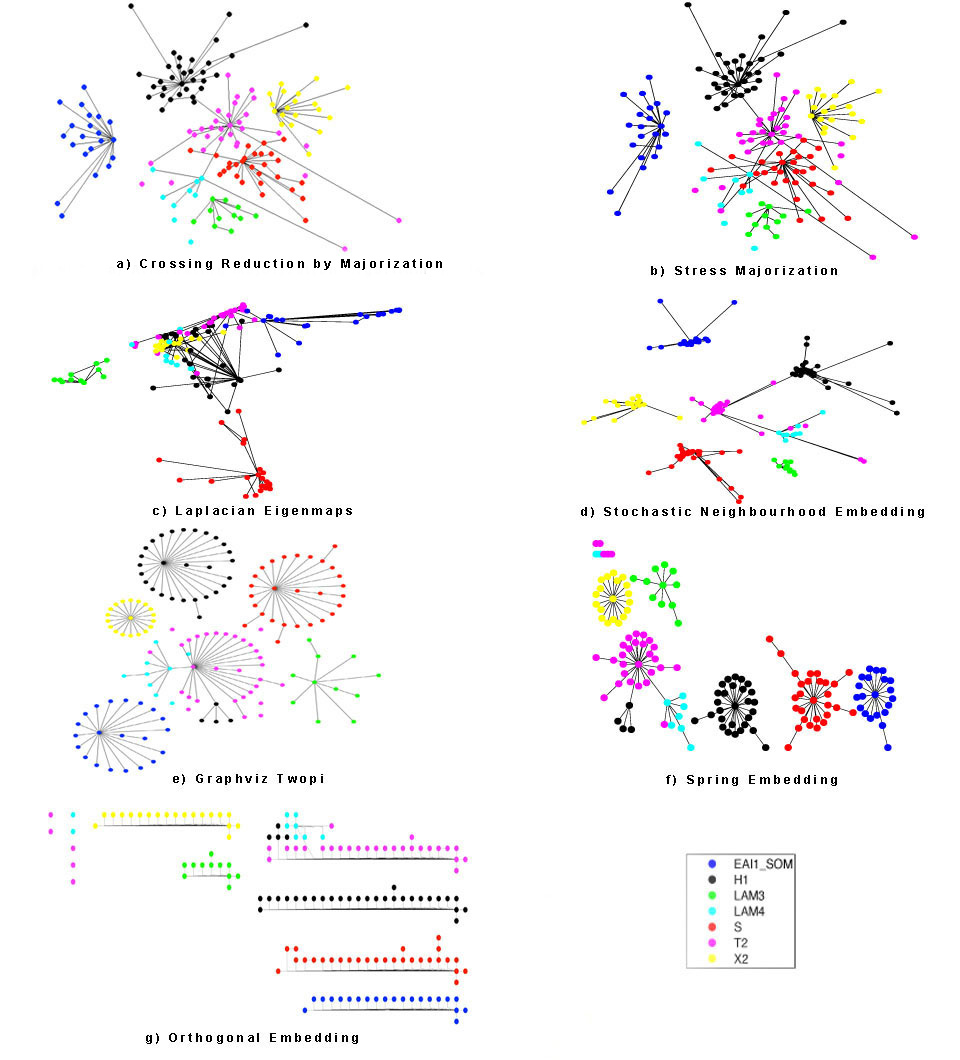}
\caption{Embeddings of spoligoforests of SpolDB4 sublineages by 7 algorithms 
(a) CR-SM
(b) MDS
(c) Laplacian Eigenmaps
(d) Stochastic Neighborhood Embedding (SNE)
(e) Graphviz Twopi
(f) Spring Embedding
(g) Orthogonal Embedding.
The proposed approach CR-SM shown in (a) eliminates all edge crossings with little change in the overall stress as compared with the stress majorization solution in (b).
}
\label{fig:SpolDB4}
\end{figure}
The goal is to create clear representations of graphs with a reduced number of crossings. This must be achieved while optimizing other embedding criteria, such as preserving pairwise distances between nodes as defined in the original high-dimensional space. Such graphs with associated pairwise distances between nodes arise in various domains; the specific motivating application for this work is visualization of phylogenetic forests (spoligoforests) of  \emph{Mycobacterium tuberculosis} complex (MTBC).  In a spoligoforest, each node represents a strain of MTBC described by its genetic fingerprint and each edge represents a putative mutation \citep{webtools}.  Each node or strain has a genetic distance that can be defined to every other strain, even if they are not connected in the underlying graph (phylogenetic forest). 
Similar graphs exist in other domains, e.g. in a graph of web pages,  each node may be a web page and each edge may represent a hyperlink between pages.  Each web page is a document with intrinsic properties, so there is an associated distance or similarity measure between nodes even if no link exists between them.

Existing embedding and graph drawing methods typically do a good job at preserving proximity relations or minimizing crossings, but not both.
 Here we illustrate the shortcomings of existing methods. Figure \ref{fig:SpolDB4} shows the visualization of a (planar) spoligoforest for the SpolDB4 subfamilies of MTBC created by the proposed approach and 6 other embedding methods:  \begin{inparaenum}[\itshape \upshape(a\upshape)]
\item CR-SM
\item MDS
\item Laplacian Eigenmaps
\item Stochastic Neighborhood Embedding
\item Graphviz Twopi
\item Spring Embedding
\item Orthogonal Embedding.
\end{inparaenum}
The proposed approach, CR-SM, minimizes crossings while preserving proximity relations. MDS by stress majorization as implemented in Graphviz NEATO \citep{majorization} shown in Figure \ref{fig:SpolDB4}(b) preserves pairwise distances specified between all pairs of nodes but has edge crossings. The Laplacian Eigenmap technique involves inferring an adjacency matrix and the corresponding weighted Laplacian based on distances between points in high-dimensional space and subsequently generating a spectral embedding based on the weighted Laplacian \citep{laplacianeigenmaps}. Limitations of Laplacian Eigenmaps reported in \citet{van2007dimensionality} are observed in the spoligoforest visualizations. Multiple nodes collapse to form dense clusters of points in the reduced space leading to collocated edges. This makes it difficult to observe individual nodes and relations between them. The graph in Figure \ref{fig:SpolDB4}(d) is generated using SNE which is based on representing proximities in high-dimensional space as conditional probabilities and generating embeddings in the reduced space that preserve these probabilities \citep{sne}. While genetically similar strains cluster together in the embedding generated by SNE, there are edge crossings and the genetic relatedness between all pairs of strains is less evident as indicated by high stress tabulated in Table 1, Section \ref{sec:results}. Embeddings generated using Graphviz Twopi that tries to preserve uniform angular span, have a visually appealing radial layout but do not represent pairwise distances between nodes. The spring embedding method treats a graph as a physical system, in which all nodes exert repulsive forces on each other, while nodes connected by edges also have attractive forces between them \citep{fdpfruchterman}. In the state of equilibrium, nodes connected by edges are placed close to each other; as the edges are short the number of crossings is low. A variation of the spring model for drawing weighted graphs defined by \citet{kamada} is based on the assumption that edge lengths need to be preserved. It does not represent the majority of pairwise distances between all pairs of nodes not directly connected by edges. Orthogonal layout methods use heuristics to generate straight-line planar embeddings with minimum edge bends \citep{orth} as shown in Figure \ref{fig:SpolDB4}(g). This method first generates a ``visibility representation'', which is a skeletal representation of the graph. Individual graph components are then substituted with equivalent straight-line forms. Further heuristics-based transformations are made to generate an orthogonal embedding with minimum edge-bends. While the planar embeddings in Figure \ref{fig:SpolDB4}(e)-(g) may be uncluttered and therefore visually appealing, they are inaccurate representations of the data.  The relative placement of individual components of the graph do not represent genetic distances defined by the distance matrix. For example, the relative placement of subgraphs representing MTBC lineages does not reflect the genetic similarity between lineages and edge lengths do not represent the extent of mutation. The proposed approach in Figure \ref{fig:SpolDB4}(a) on the other hand, represents pairwise distances correctly. By optimizing the MDS objective or any other dimensionality reduction objective with additional edge crossing penalties, the embedding has no edge-crossings and also a naturally emerging radial structure due to the minimum separation between edges enforced.

\section{Continuous Edge-Crossing Constraints}
\label{sec:cntnsedgecrossing}

We show how edge-crossing constraints can be expressed as a system of nonlinear inequalities through the introduction of additional variables for each edge crossing. Expressing the constraint that two edges must not cross as a system of nonlinear equalities is a key non-obvious first step for developing a continuous objective function to minimize edge crossings. The formulation is based on the fact that each straight-line edge is a convex polyhedron. Therefore, $\exists u \not=0$ and $\gamma$ such that for two edges that do not intersect, $x'u-\gamma$ is nonpositive for all points $x \in \mathbb{R}^2$ lying on one edge and nonnegative for all points $x \in \mathbb{R}^2$ lying on the other edge. Figure \ref{fig:cross} illustrates that the no-edge-crossing constraint corresponds to introducing a separating hyperplane defined by $\{x|x \in \mathbb{R}^2, x'u=\gamma\}$ and requiring each edge to lie in opposite half spaces. 
\begin{figure} [H]
 \begin{tabular}{cc}
(a) \;\;\; \includegraphics{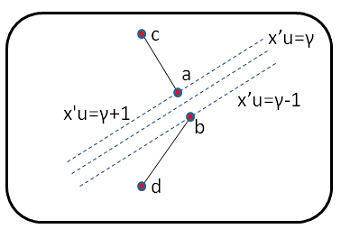}
     (b) \;\;\; \includegraphics{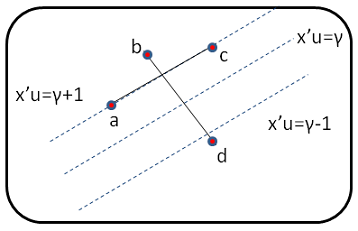}
 \end{tabular}
  \caption{ In (a) edge {$\cal{A}$}  from $a$ to $c$ and edge  $\cal{B}$ from  $b$ to $d$ do not cross.   Any line between  {$x'u-\gamma=1$} and {$x'u-\gamma=-1$} strictly separates the edges.  Using a soft margin, the plane in (b) $x'u-\gamma=0$ separates the plane into half spaces that should contain each edge.
}
 \label{fig:cross}
\end{figure}
To elucidate this idea further, note that each point on an edge can be represented as the convex combination of the extreme points of the edge.  Consider edge $\cal A$ with end points $a=[a_x \;\; a_y] \in \mathbb{R}^2$ and $c=[c_x \;\; c_y] \in \mathbb{R}^2$ and edge $\cal B$  with end points $b=[b_x \;\; b_y] \in \mathbb{R}^2$ and $d=[d_x \;\; d_y] \in \mathbb{R}^2$. The matrices $A$ and $B$ contain the end or extreme points of the edges $\cal A$ and $\cal B$  respectively.  Any point in the intersection of edge $\cal A$ and $\cal B$ can be written as a convex combination of the extreme points of $\cal A$ and also as a convex combination of the extreme points of $\cal B$. Therefore, two edges do not intersect if and only if the following system of equations has {\bf no solution}:
\begin{align}
\exists \;\;\; \delta_{A} \in \mathbb{R}^2\mbox{ and } \delta_{B}  \in \mathbb{R}^2 \mbox{ such that}\;\;\;
A' \delta_{A}=B' \delta_{B} \;\;\;
e' \delta_{A}=1 \;\;\;
e' \delta_{B}=1 \;\;\;
\delta_{A} \geq 0 \;\;\;
\delta_{B} \geq 0
\end{align}
where
 $e$ is a 2-dimensional vector of ones and $\displaystyle{A= \begin {bmatrix} a_x & a_y \\c_x & c_y \end{bmatrix}}$ and $\displaystyle{B= \begin {bmatrix} b_x & b_y \\d_x & d_y \end{bmatrix}}$.
 
The conditions that two given edges do \emph{not} cross, i.e. that (1) has no solution, are precisely characterized by using Farkas' Lemma. Farkas' Lemma states that for each fixed $p \times n$ matrix $D$  the linear system $Du\le0$, $d'u>0$ has no solution $u \in  \mathbb{R}^n$ if and only if   the system  $D'v=d$, $v\ge 0$ has a solution $v \in  \mathbb{R}^p$.

\begin{theorem}[Conditions for no edge crossing] The edges $\cal A$ and $\cal B$ do not cross if and only if $\exists$ $u$, $\alpha$, $\beta \in \mathbb{R}^2$, such that 
\begin{equation}
Au \ge \alpha e \;\;\;\;\;
Bu \le \beta e \;\;\;\;\;
\alpha - \beta >0.\\
\label{eq:cond}
\end{equation}
\label{th:nocross}
\end{theorem}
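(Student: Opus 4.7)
The plan is to apply Farkas' Lemma directly to system (1), whose infeasibility was shown (in the paragraph preceding the theorem) to be equivalent to the two edges $\cal A$ and $\cal B$ not crossing. What remains is to compute the Farkas dual of (1) and check that it coincides with condition (\ref{eq:cond}).

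First I would put (1) into the standard Farkas form by stacking $v = \begin{bmatrix}\delta_A \\ \delta_B\end{bmatrix} \in \mathbb{R}^4$ and encoding its three equality blocks $A'\delta_A - B'\delta_B = 0$, $e'\delta_A = 1$, $e'\delta_B = 1$ as a single system $D'v = d$, $v \geq 0$, with
\[
D' = \begin{bmatrix} -A' & B' \\ e' & 0' \\ 0' & -e' \end{bmatrix}, \qquad d = \begin{bmatrix} 0 \\ 0 \\ 1 \\ -1 \end{bmatrix}.
\]
The sign choices on the first and third block rows are purely cosmetic, picked so that the resulting dual inequalities line up with (\ref{eq:cond}). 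By Farkas' Lemma (as stated just above the theorem), this system is infeasible if and only if there exists $y \in \mathbb{R}^4$ with $Dy \leq 0$ and $d'y > 0$.

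Second, I would decompose $y = \begin{bmatrix} u \\ \alpha \\ \beta \end{bmatrix}$ with $u \in \mathbb{R}^2$ and scalars $\alpha, \beta$. The inequality $Dy \leq 0$ then splits as $-Au + \alpha e \leq 0$ together with $Bu - \beta e \leq 0$, i.e.\ $Au \geq \alpha e$ and $Bu \leq \beta e$, while $d'y > 0$ reads $\alpha - \beta > 0$. This is exactly (\ref{eq:cond}). Combining the two equivalences (no crossing $\Longleftrightarrow$ (1) infeasible $\Longleftrightarrow$ Farkas dual solvable $\Longleftrightarrow$ (\ref{eq:cond})) proves the theorem.

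The main obstacle I expect is purely bookkeeping: the construction is mechanical once the Farkas template is fixed, but keeping the sign conventions and the partitioning of $Dy \leq 0$ into the two half-space conditions consistent requires care. It is also worth noting that the geometric content promised in the text falls out for free: any $\gamma \in (\beta, \alpha)$ makes the hyperplane $\{x : x'u = \gamma\}$ a strict separator of the two edges, matching the SVM-style picture in Figure \ref{fig:cross}.
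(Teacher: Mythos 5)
Your proposal is correct and follows exactly the paper's route: the paper's own proof simply invokes Farkas' Lemma to assert that system (\ref{eq:cond}) is solvable iff system (\ref{eq:cond2a}) is infeasible, and you have supplied the explicit $D$, $d$ construction that makes that invocation rigorous (your block computation of $Dy\le 0$ and $d'y>0$ checks out, reading $\alpha,\beta$ as scalars, which is clearly the intended meaning despite the statement's ``$\alpha,\beta\in\mathbb{R}^2$''). No gaps; your write-up is in fact more detailed than the paper's.
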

\begin{proof}
By Farkas' Lemma, (\ref{eq:cond}) has a solution if and only if the following system has no solution
\begin{equation}
\label{eq:cond2a}
A'\delta_A -B'\delta_B =0, \;\;\;\;e'\delta_A=1 \;\;\;\; e'\delta_B=1 \;\;\;\;\delta_A \ge 0 \;\;\;\;\;\; \delta_B \ge 0.\\
\end{equation}
System \ref{eq:cond2a} has no solution if and only if the convex combination of the extreme points of $\cal A$ and $\cal B$ do not intersect.
\end{proof}

The theorem can be generalized to intersections between any component of the graph represented as convex combinations of their respective extreme points. Let the graph component represented as convex polyhedron ${\cal A}$ have $v$ extreme points and the component ${\cal B}$ have $s$ extreme points. There exists 
$\delta_A \in \mathbb{R}^v$ and $\delta_B \in \mathbb{R}^s$ such that ${\cal A}=\{ x |x = A' \delta_A, e'\delta_A=1, \delta_A\ge 0\}$ and ${\cal B}=\{ x |x = B'\delta_B, e'\delta_B=1, \delta_B\ge 0\}$. Then from Theorem \ref{th:nocross},

\begin{corollary}[Conditions for no intersection of two polyhedrons]
The polyhedrons ${\cal A}$ and ${\cal B}$ do not intersect, ${\cal A}\cap{\cal B}=\emptyset$, if and only if $\exists$ $u \in \mathbb{R}^2$, $\gamma \in \mathbb{R}$, such that 
\begin{equation}
\label{eq:cond2}
\begin{array}{l}
Au - \gamma e \ge  e\mbox{,}\;\;\;\;\;
Bu - \gamma e \le -e \\
\end{array}
\end{equation}
\end{corollary}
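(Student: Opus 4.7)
The plan is to mimic the proof of Theorem \ref{th:nocross} and then rescale to obtain the symmetric $\pm e$ margin on the right-hand sides. First, I would restate non-intersection $\mathcal{A}\cap\mathcal{B}=\emptyset$ as the infeasibility of the convex-combination system
\begin{equation*}
A'\delta_A - B'\delta_B = 0,\;\;\; e'\delta_A = 1,\;\;\; e'\delta_B = 1,\;\;\; \delta_A\ge 0,\;\;\; \delta_B\ge 0,
\end{equation*}
where now $\delta_A\in\mathbb{R}^v$ and $\delta_B\in\mathbb{R}^s$ rather than $\mathbb{R}^2$. The argument behind Theorem \ref{th:nocross} did not use anywhere that the polyhedrons had exactly two extreme points; it only used that $\mathcal{A}$ and $\mathcal{B}$ are the convex hulls of the rows of $A$ and $B$. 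So the same Farkas' Lemma application goes through verbatim, yielding vectors $u\in\mathbb{R}^2$ and scalars $\alpha,\beta\in\mathbb{R}$ with $Au\ge\alpha e$, $Bu\le\beta e$, and $\alpha-\beta>0$.

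Second, I would reduce the ``some positive margin'' statement to the normalized statement \eqref{eq:cond2}. Because $\alpha-\beta>0$, I can scale the separator: set $\hat u = \tfrac{2}{\alpha-\beta}\,u$ and $\hat\gamma = \tfrac{\alpha+\beta}{\alpha-\beta}$. A direct substitution shows $A\hat u - \hat\gamma e\ge e$ and $B\hat u - \hat\gamma e\le -e$, giving the forward direction. The reverse direction is immediate: if some $(u,\gamma)$ satisfies \eqref{eq:cond2}, then any $x=A'\delta_A$ with $e'\delta_A=1$, $\delta_A\ge 0$ gives $x'u-\gamma = \delta_A'(Au-\gamma e)\ge \delta_A'e = 1$, whereas any $x=B'\delta_B$ similarly gives $x'u-\gamma\le -1$, so the two polyhedrons cannot share a point.

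There is no real obstacle in this proof beyond faithfully adapting the previous argument; the only thing that is not automatic is the normalization step, and that is just a one-line rescaling since $\alpha-\beta>0$. In fact, the cleanest write-up is probably to invoke Theorem \ref{th:nocross} as a black box for the ``if and only if'' with a positive margin, and then simply exhibit the affine rescaling $(\hat u,\hat\gamma)$ to upgrade that margin to the unit-margin form appearing in \eqref{eq:cond2}.
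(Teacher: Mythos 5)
Your proof is correct and follows essentially the same route as the paper, which simply asserts the corollary as a consequence of Theorem \ref{th:nocross} (the Farkas' Lemma argument indeed never uses that the polyhedrons have only two extreme points). You additionally make explicit the rescaling $(\hat u,\hat\gamma)=\bigl(\tfrac{2}{\alpha-\beta}u,\ \tfrac{\alpha+\beta}{\alpha-\beta}\bigr)$ that converts the positive-margin form $\alpha-\beta>0$ into the unit-margin form of (\ref{eq:cond2}), a step the paper leaves implicit; your computation checks out.
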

Therefore, two edges (or more generally two polyhedrons) {\bf do not} intersect if and only if
\begin{equation}
\label{eq:LP}
0=\min_{u,\gamma} f(A,B, u, \gamma)=\min_{u,\gamma} || (-Au+(\gamma +1)e)_+||^q_q + ||(Bu-(\gamma -1)e)_+||^q_q 
\mbox { for } q=1 \mbox { or }\ q=2. \end{equation}
Here $q=1$ represents the one-norm, while $q=2$ represents the least-squares form. As in SVMs, (\ref{eq:LP}) can be converted into a linear or quadratic program depending on the choice of $q=1$, or $q=2$ respectively \citep{vapnik,bradley1998feature}. Here we study $q=2$. Thus, the edge-crossing condition can be converted to alternate forms and potentially more convenient mathematical programs by choice of appropriate norms and introduction of constraints and extra variables to eliminate the plus function. The minimization function in (\ref{eq:LP}) provides a natural function for penalizing edges that do cross. Much like soft-margin SVM classification \citep{cortes1995support}, two edges (or more generally two polyhedrons) $\cal A$ and $\cal B$ do not intersect if and only if there exists a hyperplane, $x'u-\gamma=0$ that strictly separates the extreme points of $\cal A$ and $\cal B$. If the edges do not cross, then the optimal objective of (\ref{eq:LP}) will be 0; while it will be strictly greater than 0 if the edges do cross.

\section{Crossing Reduction with Stress Majorization}
\label{sec:CR-SM}

Edge-crossing constraints and penalties are a practical and flexible paradigm that can be used directly or as part of optimization-based graph embedding algorithms to minimize edge crossings in graph embeddings. Many algorithms are possible depending on the variant of the formulation of the penalty terms used. In this paper, we used the differentiable least-squares loss for the penalty terms. 
For  $\ell$ edge objects there are $\frac { \ell(\ell-1)} {2} $ possible intersections and thus $O(\ell^2)$ penalty terms.  Note, an optimized embedding will typically only produce a fraction of the possible edge crossings.  Thus, this suggests an iterative penalty approach would be efficient because we need only deal with the small set of edge crossings that actually occur during the iterations of the algorithm. In this section, we describe an algorithm for minimizing the \emph{stress} (MDS objective) with quadratic penalties for edge crossings. 
 
Note that alternately, by including 1-norm penalties for every edge-crossing, an exact penalty method can be developed for crossing minimization.  Such \emph{exact} penalty functions have the desirable property that a single minimization can yield the \emph{exact} solution \citep{nocedal}. While this offers advantages on convergence rates as a finite penalty would suffice, the resulting function is non-differentiable. This strategy can also result in sharper increases in stress. This is in contrast to inexact penalty methods using quadratic penalties where the performance depends on the penalty parameter update strategy. 
Inexact penalty methods  require gradually increasing the penalty parameter in each iteration, thus requiring multiple iterations to reach the minimum. The use of gentle penalties means that small changes are made in the coordinates of the nodes resulting in lower stress. Thus, these variations can be used to define objectives with varying emphasis on the stress and intersections components, resulting in different layouts.

Using a multi-objective penalty approach, edge crossing minimization can be incorporated into any optimization-based embedding or graph drawing formulation.  In this paper, we do multi-objective optimization combining edge-cross minimization with the stress function given by the widely-used MDS objective \citep{coxmds}:
\begin{equation}
\label{eq:stressfunc}
stress(X)=\sum_{i<j}w_{ij}(||X_i-X_j||_2-d_{ij})^2
\end{equation}
for $X \in \mathbb{R}^{n \times 2}$. The stress function measures the difference of the Euclidean distance between points in the new reduced two-dimensional space and their corresponding defined m-dimensional proximities. It is therefore a measure of the disagreement between pairwise distances in the high-dimensional space and the new reduced space. Here $X_i \in \mathbb{R}^2$ is the position of the node $i$ in the two-dimensional embedding described by its x and y coordinates. $d_{ij}$ represents the distance between nodes i and j. The normalization constant $w_{ij}=d_{ij}^{- \alpha}$, $\alpha=2 $ is  used. This constant $\alpha$ can be tweaked to alter the emphasis on preserving distances between nearby or faraway nodes as needed. Thus, the solution to (\ref{eq:stressfunc}) is a configuration of points in a two-dimensional Euclidean space such that points in this space represent the original objects,  and the pairwise distances between nodes in this space match the original dissimilarities between the objects.
                                  
However, the stress function in (\ref{eq:stressfunc}) is non-convex. The stress majorization or SMACOF algorithm introduced in \citep{smacof} finds an approximate solution to (\ref{eq:stressfunc}) by iteratively minimizing quadratic approximations to the original stress function. Each approximation, known as the majorization function is simpler to minimize than the original stress function and always takes a value greater than or equal to the original function. SMACOF iteratively minimizes the approximation function $Fstress(X,Z)$ which is a quadratic approximation of the $stress$ function in (\ref{eq:stressfunc}). $Fstress(X,Z)$ is defined such that $stress(X) \leq Fstress(X,Z)$ always holds, and $Fstress(X,Z)$ touches the surface of the $stress(X)$ function at a single point $Z$ known as the \emph{supporting point}.
\begin{equation}
Fstress(X,Z)=\sum_{i<j}w_{ij}d_{ij}^2+Tr(X'L^wX)-2Tr(X'L^{Z}Z)
\end{equation}
where the $n \times n$ weighted Laplacian is defined as follows
  \begin{equation*}
L_{i,j}^{w} = \begin{cases}
-w_{ij} &  \,\, i\neq j \text{,} \\
\sum_{i\neq k} w_{ik} &  \,\,  i = j \end{cases}
\end{equation*}
and
  \begin{equation*}
L_{i,j}^{Z} = \begin{cases}
-w_{ij}d_{ij}inv\left(||Z_i-Z_j||_2\right) &  \,\, i\neq j \text{,} \\
\displaystyle -\sum_{i\neq j} L_{i,j}^{Z} &  \,\,  i = j \end{cases}
\end{equation*}
where $inv(x) = \frac{1}{x}$ when $x \neq 0$ and 0 otherwise, and $Z \in \mathbb{R}^{n \times 2}$ is a constant matrix and is the value of $X$ from the previous iteration. 

The addition of 2-norm edge crossing penalties for $m$ crossings produces the following unconstrained optimization problem:
\begin{multline}
\min_{X, U, r} p(X,Z,U,r)=\min_{X, U, r} Fstress(X,Z)+ \\ \sum_{i=1}^m \frac{\rho _i} {2} [|| (-A^i X U_i+(r_i +1)e)_+||_2 ^2+ ||(B^i X U_i-(r_i -1)e)_+||_2^2] 
  \label{eq:problem}
 \end{multline}
where $\rho \in \mathbb{R}^m$ defines the penalty parameters. Here $A^i \in \mathbb{R}^{2\times n}$ represents a matrix that serves to select the coordinates of the end-points of edge object $\cal{A}$ from $X$ to obtain $A= \begin {bmatrix} a_x & a_y \\c_x & c_y \end{bmatrix}$ as defined in Section \ref{sec:cntnsedgecrossing}. $U \in \mathbb{R}^{m \times 2}$ and $r \in \mathbb{R}^m$. $U_i$  and $r_i$ contains the $u$ and $\gamma$ obtained from (\ref{eq:LP}) that define the separating hyperplane for edge-crossing $i$ .

The iterative penalty method given in Algorithm \ref{alg} progressively increases the penalty parameters on each detected edge crossing until the algorithm converges. \footnote{The algorithm was implemented in Matlab. The initial solution $X^1$ with stress ${s^1}$ is calculated using \emph{stress majorization}. The following parameters are used: $\epsilon=1e-3$, $\tau=1e-6$, $constant = 4$, $\rho_{min} = s^1/constant$, $\rho_{inc}=1.1$ and $\rho_{max}=10^6$.} The algorithm begins by finding the stress-majorization solution $X^1$ and then refining the solution by introducing penalties for crossed edges. At each iteration, the edge crossing detection QP (\ref{eq:LP}) is solved to both detect edge crossings and calculate the hyperplanes used in the edge-crossing penalties.  For each crossed edge, the penalty corresponding to that edge pair is increased at each iteration until a maximum penalty is reached.  The penalty is increased slowly to avoid problems with ill-conditioning.  For this work, we emphasized edge crossing minimization so  $\rho_{max}$ is set high.  But $\rho_{max}$ can be reduced to examine the trade-offs between the embedding and edge-crossing minimization objectives.  Computational efficiency is gained due to the fact that edges that never cross in the course of the algorithm have a penalty of 0. A formal analysis of the computational complexity of this algorithm is left for future work.
\begin{algorithm}
\caption{Crossing Reduction with Stress Majorization}
\label{alg}
\begin{algorithmic}
\STATE Input: Pairwise distances, Edge list
\STATE $C \gets \emptyset$
\STATE $U^* \gets [ ]$
\STATE $r^* \gets [ ]$

\REPEAT
\REPEAT
    \FOR {each edge pair $A^i$ and $B^i, i=1, \ldots, m$} 
	\STATE $(u^{*},\gamma^{*}) \gets argmin f(A^i,B^i, U_i, r_i)$   \{defined in QP (\ref{eq:LP})\}
     \IF{$f(A^i,B^i, u^{*},\gamma^{*})\ge \tau$  and $i  \notin C$}
          \STATE $\rho_i \gets \rho_{min}$ 
           \STATE $C \gets C \cup i$
           \STATE $U_i \gets u^{*}$
           \STATE $r_i \gets \gamma^{*}$
     \ENDIF
   \ENDFOR
   \STATE $Z \gets X^j$
   \STATE $L^Z \gets L^{X^j}$
	\STATE $X^{j+1} \gets argmin(p(X,Z,U^*,r^*))$ \{defined in (\ref{eq:problem})\}
\STATE $j \gets j+1$
	      \STATE $\rho_{i} \gets min(\rho_{inc} \times \rho_i, \rho_{max})$
\UNTIL{$||X^j-X^{j-1}|| \geq \epsilon$ \AND $C \not=\emptyset$ }
\UNTIL {$||X^j-X^{j-1}|| \geq \tau$  \AND $ {\partial \over \partial X} p(X^j,X^{j-1},U^*,r^*) \geq \tau$  \AND $C \not=\emptyset$}
\end{algorithmic}
\end{algorithm}

For each fixed value of penalty parameter $\rho$, the objective (\ref{eq:problem}) is minimized using an algorithm that alternates between a $U-phase$ and an $X-phase$. In the $U-phase$, the soft margin separating hyperplane ($u^i$,$\gamma^i$) for each edge pair $i$ for a fixed $X$ is determined by solving QP(\ref{eq:LP}). For crossings that are present in successive iterations, the $u$ and $\gamma$ determined by solving (\ref{eq:LP}) in the previous iteration are used as a ``warm-start" point for improved efficiency. An inexpensive heuristic is used to reduce the number of edge-pairs checked: no crossing possible if bounding boxes enclosing edges do not intersect. In the $X-phase$, the configuration of nodes $X$ is determined by minimizing (\ref{eq:problem}) with respect to $X$ alone for the fixed values of $U$ and $r$ defining the separating hyperplanes determined in the $U-phase$. The BFGS algorithm as implemented in the Matlab Optimization Toolbox is used to minimize this edge-crossing  penalized stress function. The penalties for crossed edges are driven higher until no edge crossings exist or the problem converges. Note, most edge pairs have penalty parameter $\rho_i=0$ since they never cross. Therefore, such an alternating iterative strategy involves solving relatively simpler problems in each phase for each iteration.
\section{Results and Interpretation}
\label{sec:results}
Two sets of graphs were used to evaluate the performance of CR-SM: 
\begin{itemize}
\item{a real-world application: visualization of \emph{phylogenetic forests} (spoligoforests) of MTBC strains}, and 
\item{randomly generated graphs that have a known planar embedding with high stress. The graphs were designed to challenge the CR-SM algorithm to find an alternate embedding that minimizes crossings while keeping the stress low.}
\end{itemize}
\subsection{Embedding of  Spoligoforests}
To demonstrate the performance of the approach, we return to the motivating application: visualization of spoligoforests created from  DNA fingerprints of MTBC strains \citep{reyes2008,webtools}. Each node of the spoligoforest corresponds to a distinct genotype of MTBC as determined by two types of DNA fingerprints: 43 bit long spoligotypes and 12 loci of MIRU. The genetic distance between nodes is measured by the number of distinct changes in the spoligotypes and MIRU \citep{RulesPaper}. Nodes are colored by lineage, which are assigned to strains by experts on the basis of the strains' biomarkers. The adjacency relationships in the spoligoforest are determined as in \citep{RulesPaper}. The graph is always planar by definition since it consists of multiple trees. Note, not all putative evolutionary relationships between strains can be inferred, resulting in a disconnected graph that has ``orphan'' nodes, i.e. nodes not connected to any other components. This presents a challenge to traditional graph drawing methods like spring embedding techniques that use existence of edges as indicators of proximity. Strains belonging to the same lineage are likely to have small genetic distances between them. There may be some deviations from this expectation which may be of interest from a biological perspective as well. Thus, good spoligoforest visualizations must accurately represent pairwise genetic distances between strains belonging to the same lineage as well as across lineages, and also evolutionary distances between the lineages (subgraphs) themselves. 
In this section, we make comparisons of visualizations generated using CR-SM with six existing crossing minimization and proximity-preserving graph embedding methods that were described in Section \ref{sec:Motivation}.

We examine the visualization of spoligoforests with distance matrices defined using spoligotype and MIRU type (MTBC biomarkers) for four problems. The results are summarized in Table \ref{spoligoforeststats} and the visualizations are shown in Figure \ref{fig:SpolDB4} and the appendix. For all graphs, CR-SM was initialized using the MDS produced by the stress majorization algorithm and run until convergence criteria described in \ref{sec:CR-SM}. In all the spoligoforests, the proposed method, CR-SM, drastically reduces the edge crossings, while making only minor changes in the total stress as compared with other proximity preserving methods MDS, Laplacian Eigenmaps and SNE. While the dimensionality reduction techniques Laplacian Eigenmaps and SNE extended to apply to graph data, can represent proximity relations locally, visualizations generated by these methods have a large number of edge crossings obscuring underlying relationships. Additionally, they do not represent all pairwise distances faithfully. As indicated by the stress values in Table \ref{spoligoforeststats}, the CR-SM visualizations are more informative and accurate than those produced by existing popular approaches for drawing planar graphs e.g. embeddings generated using spring, orthogonal and Twopi that disregard genetic distances available in the heterogeneous data \citep{reyes2008, RulesPaper}. All stress values reported in Table \ref{spoligoforeststats} are scaled such that the stress produced by NEATO (stress majorization implementation) is 1. These results show the efficacy of CR-SM in optimizing multiple objectives pertaining to both stress and crossings.

Note while the original graph is in a fifty-five dimensional space, the data is inherently lower dimensional, thus many embeddings are possible with similar stress. In three of the four graphs, CR-SM, i.e. minimizing the majorization function with edge-crossing penalties, actually produced  graphs with less stress than the those generated by NEATO (stress majorization). This illustrates that edge-crossing penalties may help guide stress majorization to a more desirable local minima  with little or no change in the overall stress. The proposed approach can be used to dynamically remove edge crossings in an existing graph. An animation of the proposed algorithm altering the initial MDS solutions can be viewed at \url{http://www.cs.rpi.edu/~shabba/FinalGD/}.

\begin{table}
\label{tab:spolstats}
\centering
\small\addtolength{\tabcolsep}{-1pt}
\tiny{
\begin{tabular}{|c|c|c||c|c|c||c||c||c|c||c|c||}
\cline{1-12}
\multirow{2}{*}{Graph} & \multirow{1}{*}{Num of} & \multirow{1}{*}{Num of} & \multicolumn{3}{|c||}{CR-SM(MDS+Constraints)} & \multirow{1}{*}{MDS}  &\multirow{1}{*}{Planar} & \multicolumn{2}{|c||}{Laplac. Eigenmap} & \multicolumn{2}{|c||}{SNE}\\ \cline{4-12}
& \multirow{1}{*}{Nodes} & \multirow{1}{*}{Edges} & \tiny{stress} & \tiny{Init. \# cr.} & \tiny{Final \# cr.} &  \tiny{\# cr.} & \multirow{1}{*}{\tiny{stress}} &\tiny{stress} &\tiny{\# cr.} &\tiny{stress} &\tiny{\# cr.} \\ \cline{1-12} \hline
\tiny{LAMs}&68& 	66& 	0.91& 	27& 	0& 	43& 	3.12& 		9.77 & 29	 &4.5462 &13\\ \cline{1-12}
\tiny{\emph{M. africanum}}& 	97& 	89& 	0.99& 	11& 	0& 	11& 	6.32& 		18.81 & 210 & 14.0103&4\\ \cline{1-12}
\tiny{H, X, LAM}& 45& 	29& 	0.90& 	9& 	0& 	1& 	8.64& 		15.83 & 87 & 8.7490&41\\ \cline{1-12} 
\tiny{SpolDB4}& 151& 	138& 	1.06& 	51& 	2& 	51& 	3.32& 	 	  12.46 & 207 & 6.0467&39\\ \cline{1-12} \hline
\end{tabular}
}
\caption{Stress and number of crossings (abbreviated \# cr.) in embeddings generated for four spoligoforests by the proposed approach CR-SM. Comparisons made with (i) MDS embedding (NEATO) (ii) Planar embeddings (Twopi) and embeddings generated by alternate proximity preserving methods (iii) Laplacian Eigenmaps and (iv) SNE.}
\label{spoligoforeststats}
\end{table}

\subsection{Randomly Generated Planar Graphs}
In this section, we demonstrate the performance of the method on a suite of randomly generated planar graphs.
In order to evaluate the performance of the algorithm, we generate random graphs that have at least one known planar embedding. However, the graphs were constructed so that the known planar embedding violates the proximity preservation requirement.  The challenge for the CR-SM algorithm is to find alternate embeddings that preserve proximity relations while still keeping the number of crossings low. 

The graphs were generated as follows: $|V|$ points were generated in $\mathbb{R}^n$ with $n$ varying from 7, 15 to 20 to make generating the embedding progressively more challenging. The Euclidean distance between each pair of points was determined. The points were projected in $\mathbb{R}^2$ and $|E|$ edges were introduced between nodes so that planarity is preserved using a Markov Chain algorithm, as per the method in \citep{randomgraphs}. Since the planar embedding has high stress and is not truly representative of the proximity relations in the data, it is not the most desirable embedding. By relaxing the requirement for 0 crossings, CR-SM can find alternate embeddings with few edge crossings and reduced stress, thus achieving a balance between these often contrary objectives. 
\begin{figure}
  \begin{tabular}{cc}
(a) \;\;\; \includegraphics{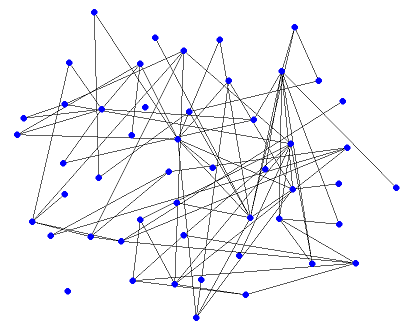}
(b) \;\;\; \includegraphics{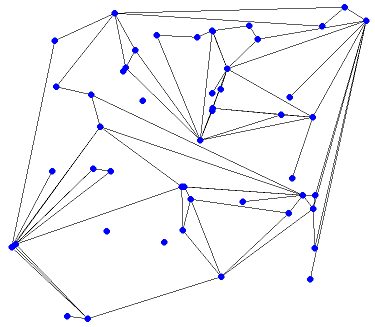}
 \end{tabular}
 \caption{ Embeddings for randomly generated graph in $\mathbb{R}^7$ with 50 nodes and 80 edges using (a) Stress majorization \emph{(stress$=$131.8, 369 crossings)} and (b) CR-SM \emph{(stress$=$272.1, 0 crossings)}. The original  planar embedding had \emph{stress$=$352.5}.}
 \label{fig:randgraphs}
\end{figure}

 \begin{figure}
(a) \includegraphics[scale=0.65]{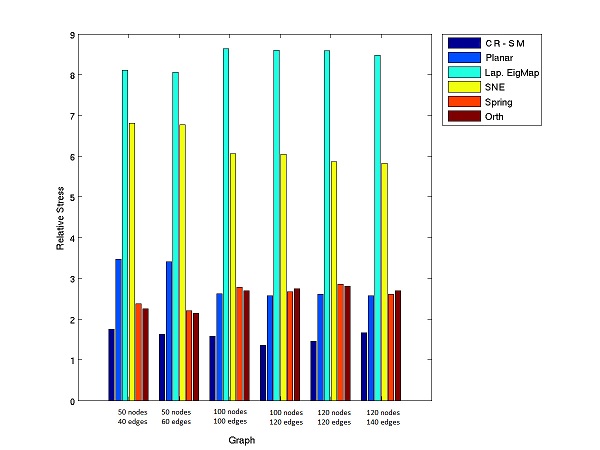}\\
(b) \includegraphics[scale=0.65]{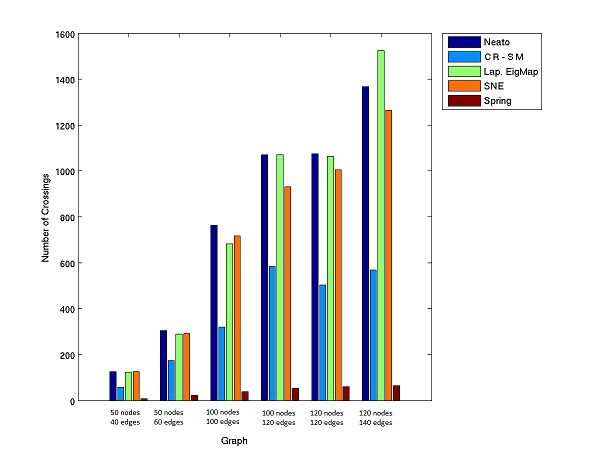}
 \caption{Comparison of (a) stress and (b) number of crossings in embeddings for randomly generated graphs with 50-120 nodes and 40-160 edges generated using CR-SM, NEATO (stress majorization), Laplacian Eigenmaps, SNE, Spring and  Orthogonal embedding. Stress is scaled such that the stress produced by NEATO is 1.}
 \label{fig:randgraphsplots}
\end{figure}

\begin{figure}
\includegraphics[scale=0.5]{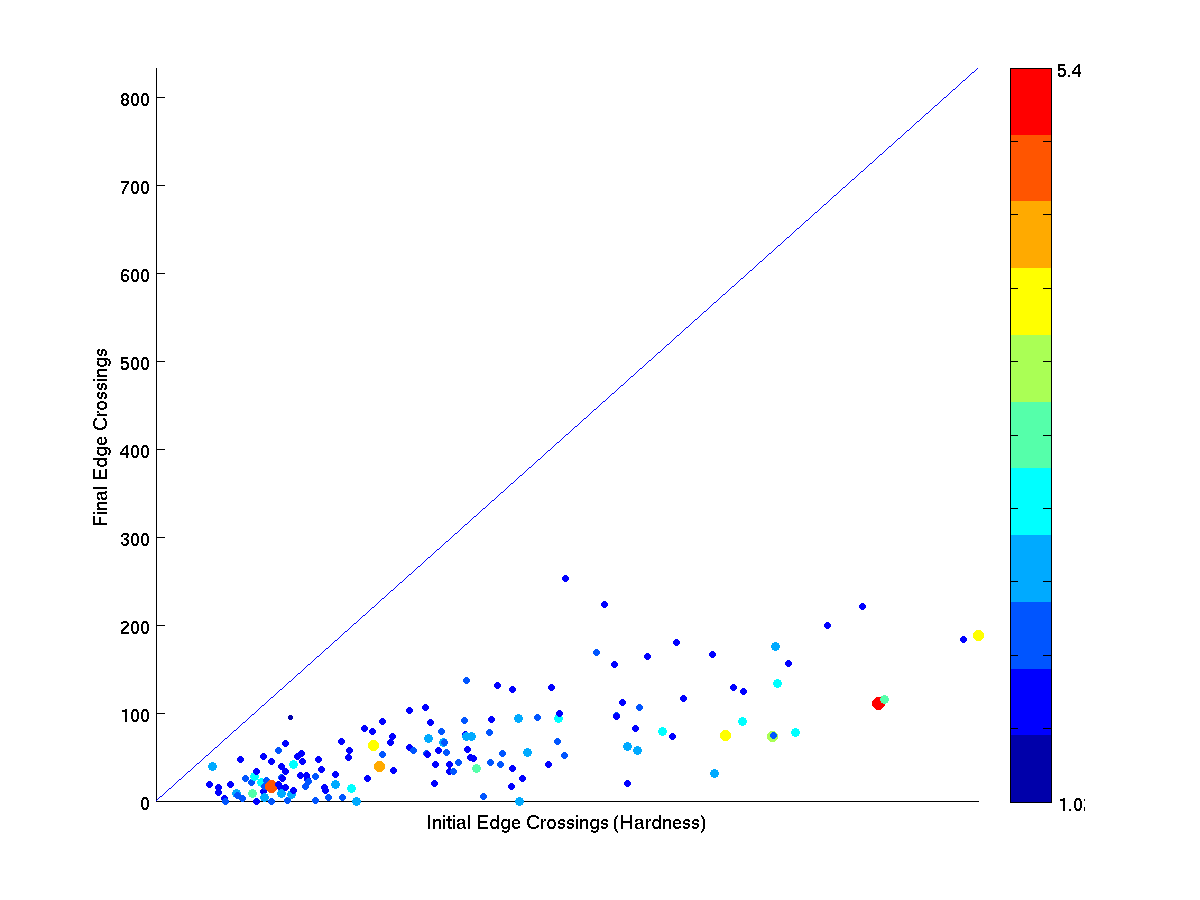}
  \caption{ Plot comparing embeddings generated by CR-SM and stress majorization in terms of stress and number of crossings for 160 randomly generated graphs with 50-80 nodes and 40-160 edges.}
\label{fig:stresscrossrandgraph}
\end{figure}
The plots in Fig. \ref{fig:randgraphsplots} represent the number of crossings and stress of embeddings for a set of 160 randomly generated graphs with 50 to 120 nodes and 40 to 160 edges averaged for all graphs with the same $|V|$ and $|E|$. The number of crossings increases with the graph density. Comparisons are made between CR-SM and the six other algorithms discussed in this paper: MDS, SNE, Laplacian Eigenmaps, Orthogonal Embedding, Spring Embedding and Twopi. Optimizing with respect to the stress alone (MDS), results in embeddings that have many edge crossings. The Laplacian Eigenmap embedding that is optimized with respect to a different proximity preservation objective aimed at preserving local structure also has a large number of crossings. Force-directed placement methods e.g. spring embedding \citep{fdpfruchterman} and planar grid embedding techniques like ORTH \citep{orth} have a low number of edge crossings but can have high stress. Alternate embeddings are generated by CR-SM that keep the number of edge crossings low while preserving proximity relations, thus clearly representing the underlying graph structure i.e. adjacency and connectivity information. Figure \ref{fig:stresscrossrandgraph} shows a plot of final edge crossings vs initial edge crossings in embeddings generated by CR-SM for these 160 randomly generated graphs. The size and color of the nodes represent the ratio of final stress of the embeddings found by CR-SM to that of the stress majorization solution found by NEATO. As indicated by the vast majority of blue dots in the lower half of the plot, CR-SM can produce embeddings with a significant reduction in the number of crossings with small increase in stress as compared to the graph embeddings produced using only the MDS objective. 

\section{Discussion}
In this work we introduce a fundamentally new paradigm for elimination of edge crossings in graph embeddings. We developed a novel approach to simultaneously optimizing the aesthetic criteria for no edge crossings and preservation of proximity relations in heterogeneous graph data.  This work demonstrates how edge-crossing constraints can be formulated as a system of nonconvex constraints.  Edges do not cross if and only if they can be strictly separated by a hyperplane. If the edges cross, then the hyperplane defines the desired half-spaces that the edges should lie within. The edge-crossing constraints can be transformed into a continuous edge-crossing penalty function in either 1-norm or least-squares form.  We developed the Crossing Reduction with Stress Majorization (CR-SM) algorithm that couples the stress majorization algorithm with a penalty method for edge crossing minimization. Computational results demonstrate that this approach is quite practical and tractable. Continuous optimization methods can be used to effectively find local solutions, a very desirable outcome since drawing graphs with a minimum number of edge-crossings is NP-hard.  Successful results were illustrated on problems in the epidemiology of tuberculosis involving visualizing phylogenetic forests that were not adequately addressed using existing planar graph drawing approaches since they did not preserve proximity relations and gave especially undesirable results on disconnected graphs.  Dimensionality reduction techniques such as Laplacian Eigenmaps and SNE applied to this problem were successful in depicting proximities amongst immediate neighbors, but they failed to represent all pairwise distances. Moreover, they have many crossings. Whereas stress majorization, a popular means of solving MDS, achieves low stress, it also has a large number of crossings. Interestingly, CR-SM actually found embeddings with less stress and reduced number of crossings as compared with the stress majorization solution. This may be caused by the fact that  the MTBC data is from a fifty-five dimensional space and the stress function is highly nonconvex with  many possible locally optimal embeddings existing with similar stress, thus the edge crossing constraints may help guide the algorithm to a more desirable local solution both from a stress and aesthetic point of view. Results on high dimensional random graphs with planar embeddings show that the method can find much more desirable solutions from a visualization point of view with only relatively small changes in stress.   

This work opens up many avenues for future research at the intersection of machine learning and data visualization. Here we focused on elimination of edge crossings and stress minimization (MDS).  The general multiobjective approach for minimizing overlaps between graph components is applicable to any optimization-based dimensionality reduction, graph drawing or embedding methods \citep{van2007dimensionality, qpdwyer} used for data visualization in both  supervised and unsupervised learning.  While the method described was motivated by the need to minimize edge crossings and simultaneously preserve pairwise distances in heterogeneous graph data as defined by the MDS objective, it can be used to eliminate edge crossings with any embedding objective. The theorems and algorithms are directly applicable to the intersection of any graph components that are convex polyhedrons.
Thus, the method can also be used to eliminate node-node overlaps and node-edge crossings. Our preliminary work was limited to planar graphs, but the penalty approach can be used to reduce crossings in nonplanar graphs as well. Since the edge-crossing constraints are very closely related to linear SVM, all the different classification and regularization loss functions in SVM could be used to produce crossing-penalty functions with different aesthetic effects and algorithmic ramifications.  For example, maximum margin separation can enforce maximum spacing between graph components. This work used the Matlab function ``fminunc" as its primary workhorse -- which inherently limits the problem size. In reality, there is a great potential for making highly scalable special purpose algorithms for edge-crossing-constrained graph embeddings. The state-of-the-art linear SVM algorithms which are massively scalable can potentially be adapted to this problem as well. We leave these promising research directions as future work.


\acks{We would like to thank Prof. B\"ulent Yener of Rensselaer Polytechnic Institute for his valuable suggestions. We would like to acknowledge support for this project from NIH (R01LM009731).}


\newpage
\appendix
\section*{Appendix A}
We present embeddings of spoligoforests of LAM, West African and H-X-LAM lineages in this appendix. For each spoligoforest we show the visualization generated by the proposed approach CR-SM, proximity preserving embeddings generated using MDS, SNE and Laplacian Eigenmaps and planar embeddings generated using Twopi, Spring and Orthogonal embedding algorithms. The planar embeddings are visually appealing, but genetic distances between strains are not faithfully reflected. Orphan nodes are assigned arbitrary positions and relative placement of the lineages does not reflect the domain knowledge about putative evolutionary distances between lineages. Laplacian Eigenmap produce highly dense clusters of nodes that completely occlude visualization of edges. Both SNE and Laplacian Eigenmaps do not represent \emph{all} pairwise distances. Moreover, they have a large number of crossings. Embeddings generated using NEATO that optimize the MDS objective preserve proximity relations but have many edge crossings. The proposed approach CR-SM eliminates all edge crossings with little change in the overall stress. 
\begin{figure}
\includegraphics[scale=0.5]{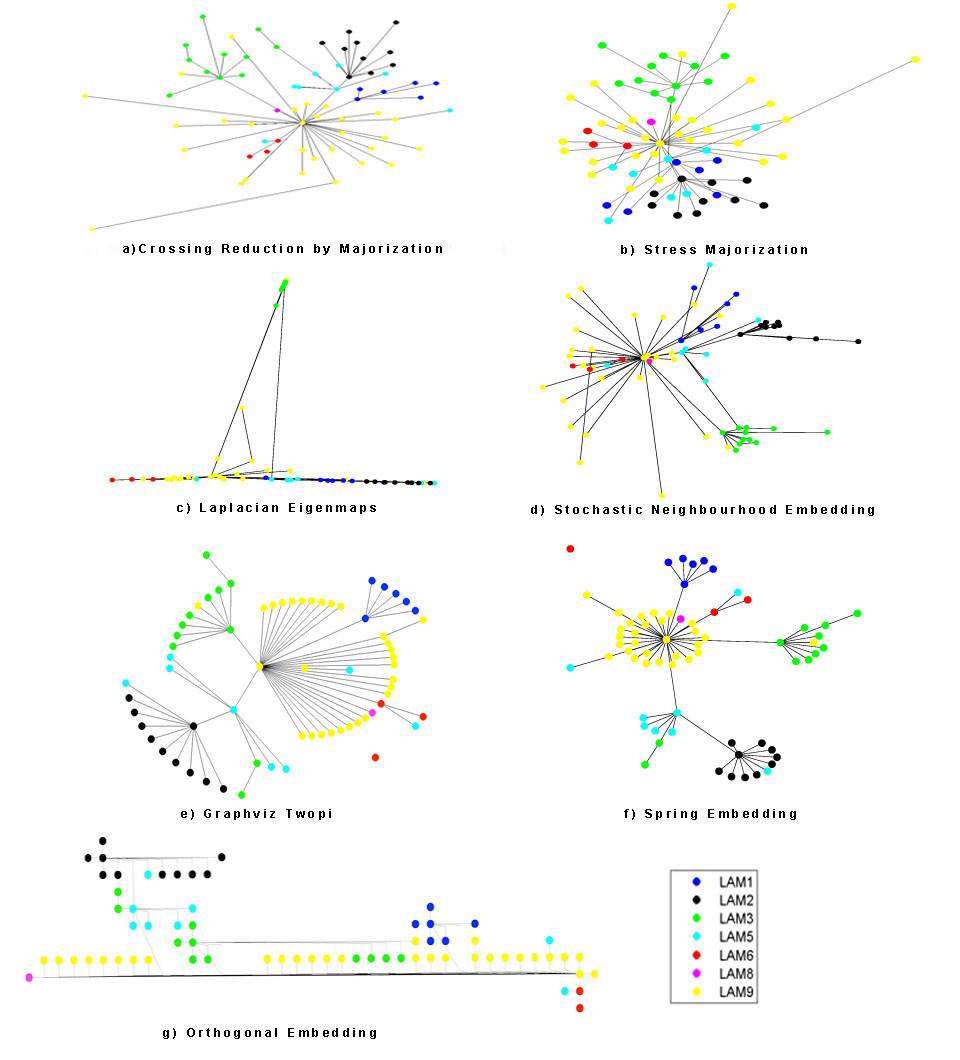}
\caption{Embeddings of spoligoforests of LAM sublineages by 7 algorithms 
(a) CR-SM
(b) MDS
(c) Laplacian Eigenmaps
(d) Stochastic Neighborhood Embedding (SNE)
(e) Graphviz Twopi
(f) Spring Embedding
(g) Orthogonal Embedding.}
\end{figure}

\begin{figure}
\includegraphics[scale=0.5]{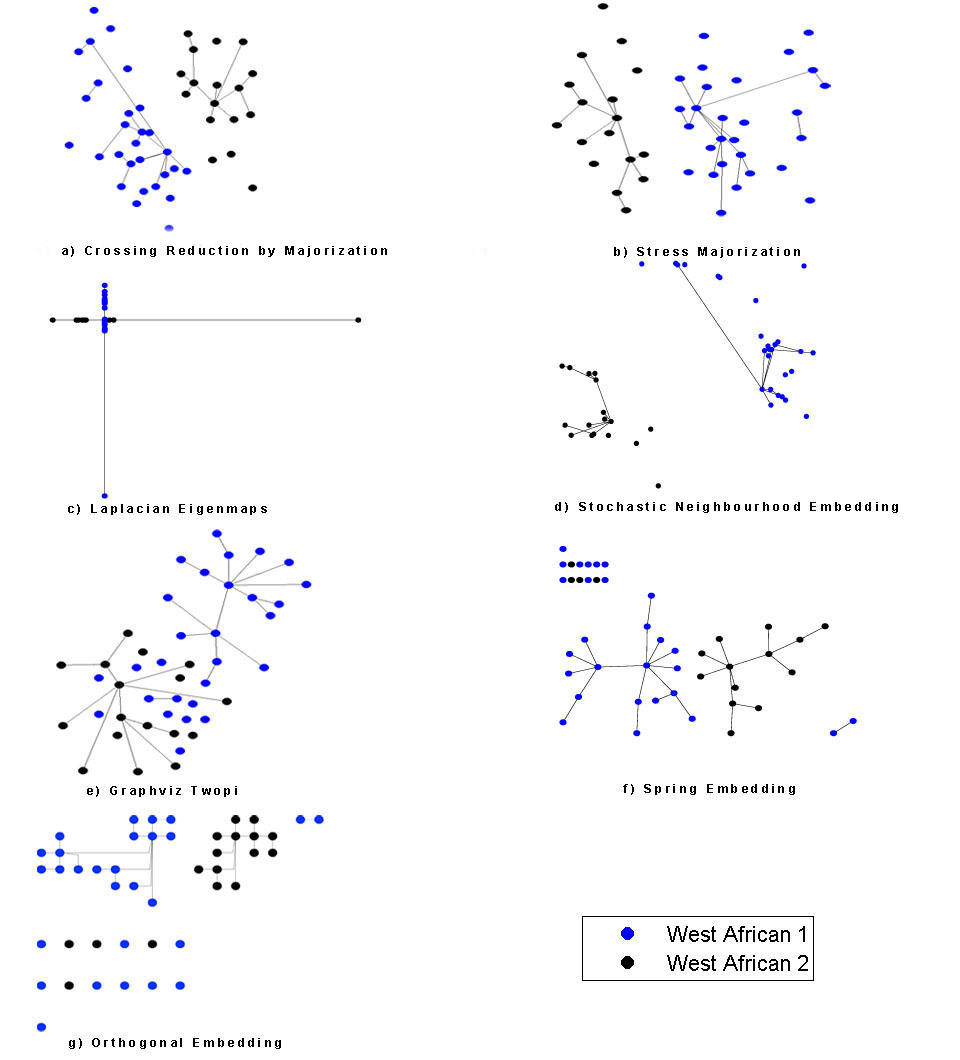}
\caption{Embeddings of spoligoforests of \emph{M. afcricanum} sublineages by 7 algorithms 
(a) CR-SM
(b) MDS
(c) Laplacian Eigenmaps
(d) Stochastic Neighborhood Embedding (SNE)
(e) Graphviz Twopi
(f) Spring Embedding
(g) Orthogonal Embedding.}

\end{figure}
\begin{figure}
\includegraphics[scale=0.5]{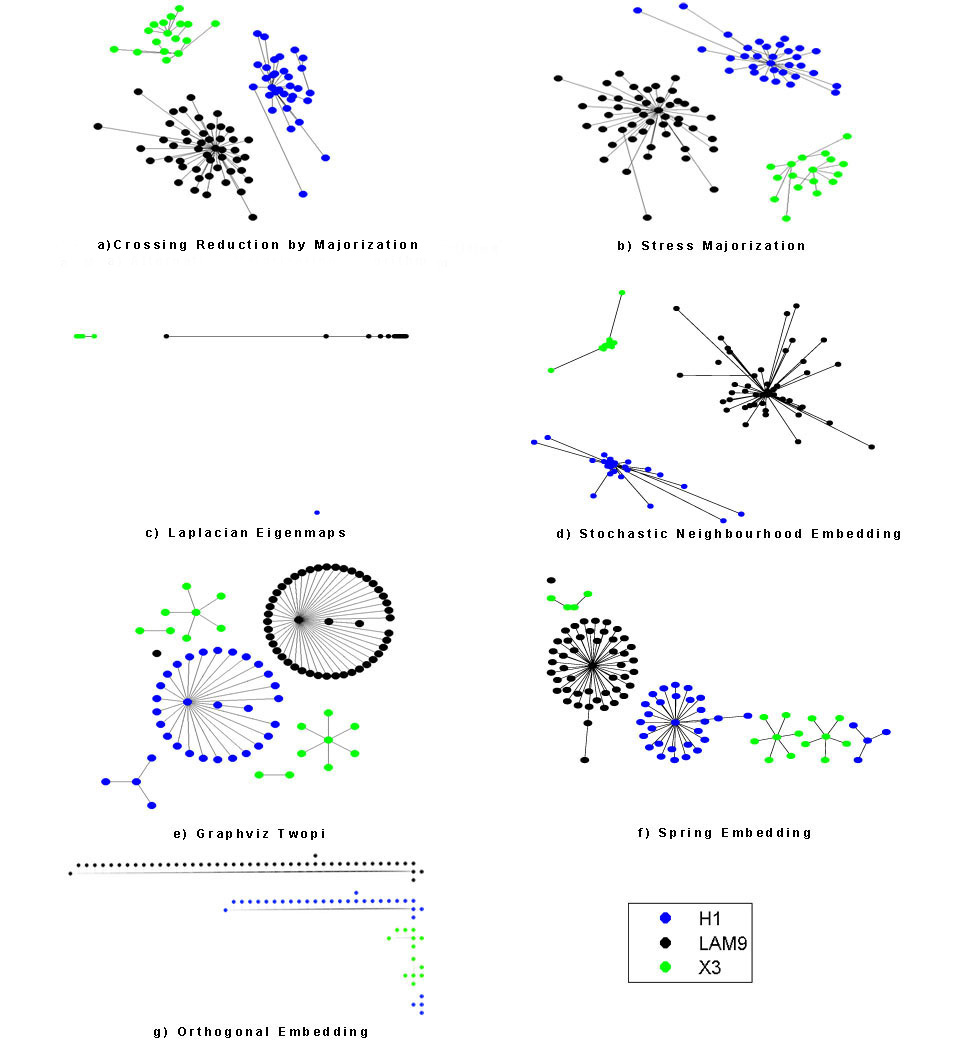}
\caption{Embeddings of spoligoforests of H-X-LAM sublineages by 7 algorithms 
(a) CR-SM
(b) MDS
(c) Laplacian Eigenmaps
(d) Stochastic Neighborhood Embedding (SNE)
(e) Graphviz Twopi
(f) Spring Embedding
(g) Orthogonal Embedding.}
\end{figure}

\newpage
\bibliography{candidacybib2}

\end{document}